\theoremstyle{plain}
\newtheorem{Thm}{Theorem}[section]
\newtheorem{Cor}[Thm]{Corollary}
\newtheorem{Prop}[Thm]{Proposition}
\theoremstyle{definition}
\newtheorem{Def}[Thm]{Definition}
\newtheorem{Ex}[Thm]{Example}
\theoremstyle{remark}
\begin{document}
\title[Polynomials and Umbilics]%
   {Roots of Polynomials and Umbilics of Surfaces}
\author{Brendan Guilfoyle}
\address{Brendan Guilfoyle\\
          School of STEM \\
          Munster Technological University\\
          Tralee  \\
          Co. Kerry \\
          Ireland.}
\email{brendan.guilfoyle@mtu.ie}
\author{Wilhelm Klingenberg}
\address{Wilhelm Klingenberg\\
 Department of Mathematical Sciences\\
 University of Durham\\
 Durham DH1 3LE\\
 United Kingdom}
\email{wilhelm.klingenberg@durham.ac.uk }

\date{\today}

\begin{abstract}
For certain polynomials we relate the number of roots inside the unit circle with the index of a non-degenerate isolated umbilic point on a real analytic surface in Euclidean 3-space. 

In particular, for $N>0$ we prove that for a certain ($N+2$)-real dimensional family of complex polynomials of degree $N$, the number of roots inside the unit circle is less than or equal to $1+N/2$. This bound is established as follows. 

From the polynomial we construct a convex real analytic surface containing an isolated umbilic point, such that the index of the umbilic point is determined by the number of roots of the polynomial that lie inside the unit circle. The bound on the number of roots then follows from Hamburger's bound on the index of an isolated umbilic point on a convex real analytic  surface. 

The class of polynomials that arise are those with self-inversive second derivative. Thus the number of roots inside the unit circle is proven to be bounded for a polynomial with self-inversive second derivative.

\end{abstract}

\subjclass[2020]{Primary 30C15; Secondary 53A05}
\keywords{Zeros, complex polynomial, umbilic point, convex surface, self-inversive polynomials}
\maketitle

\section{Introduction}

Almost two centuries ago Cauchy established that all zeros of a complex polynomial must lie inside a circle of a radius that can be determined by the moduli of the polynomial's coefficients \cite{cauchy}. Over the intervening years a vast number of relations have been found between polynomial roots lying inside a circle or annulus, and the coefficients of the polynomial \cite{dattandgov} \cite{kumar} \cite{RaS}. 

The determination of the location of the zeros of a given polynomial relative to the unit circle in the complex plane arises in questions of stability in many areas of applied mathematics, including numerical integration, difference equations, control theory and economic growth models \cite{duffin} \cite{miller} \cite{richandmort} \cite{samuelson}. 

Polynomials whose zero set is invariant under inversion in the unit circle $\zeta \mapsto 1/\bar{\zeta}$ are called {\em self-inversive}. The roots of self-inversive polynomials have been considered for numerous reasons \cite{BaM52} \cite{OhaR74} \cite{TSS}, including their relationship to Ramanujan polynomials \cite{V17} and codes \cite{JaS18}. 

In this paper we present a relationship between the number of zeros contained inside the unit circle of polynomials with self-inversive second derivative and the index of isolated umbilic points on real analytic convex surfaces in ${\mathbb R}^3$. A deep result of Hamburger \cite{ham} bounds the index of an isolated umbilic point on any real analytic convex surface, which yields a bound on the number of zeros inside the unit circle. 

Umbilic points have many subtle properties - they can control the evolution of surfaces under curvature flows \cite{gak4}, while Hamburger's index bound does not hold for even small perturbations of the ambient Euclidean metric \cite{g21}. 

It is a remarkable fact that the polynomial constraints that arise from the Lagrangian condition in the space of oriented affine lines in ${\mathbb R}^3$ at a non-degenerate umbilic point are equivalent to the second derivative of the lowest order polynomial being self-inversive.

The main result of this paper is:

\vspace{0.1in}

\noindent{\bf Main Theorem}:

{\it
Let $P_N$ be a polynomial with self-inversive second derivative and suppose that none of the roots of $P_N$ lies on the unit circle. Then the number of roots (counted with multiplicity) of $P_N$ inside the unit circle is less than or equal to $\lfloor N/2\rfloor+1$.
}

\vspace{0.1in}

One way to view this result is in the spirit of a converse to the fundamental Gauss-Lucas theorem \cite{Marden} in which the zeros of the first derivative of a polynomial are restricted by the zeros of the polynomial. Here, however, by methods of differential geometry, the locations of the zeros of the second derivative restrict the zeros of the polynomial. It is also worth noting that the result is sharp in that for any integer $N\geq 3$ there exists a polynomial $P_N$ satisfying the hypothesis of the Main Theorem for which the bound $\lfloor N/2\rfloor+1$ for the number of zeros inside the unit circle is attained.

We prove this result by showing in Theorem \ref{t:main} that, given a polynomial $P_N$ satisfying the self-inversive conditions (\ref{e:rels}), there exists a convex real analytic surface in ${\mathbb R}^3$ with an isolated umbilic point which has index $I=K-N/2$, where $K$ is the number of zeros of $P_N$ lying inside the unit circle. Thus, the local umbilic index bound of \cite{ham}, which states that $I\leq 1$, yields the stated result. A different approach to self-inversive polynomials and the support function can be found in the thesis \cite{Mayost}.

In the next section we give the background on the construction of convex surfaces, using Lagrangian surfaces in the space of oriented affine lines in ${\mathbb R}^3$ - further details can be found in \cite{gak2}.  Section 3 contains the proof of the Main Theorem and in the final section we illustrate our techniques by constructing explicit examples. 

\vspace{0.2in}

\section{Real Analytic Convex Surfaces in ${\mathbb R}^3$}

\begin{Def}
Let ${\mathbb L}$ be the set of oriented (affine) lines in Euclidean 3-space ${\mathbb R}^3$. 
\end{Def}
\begin{Def}
Let $\Phi:TS^2\rightarrow{\mathbb L}$ be the map that identifies ${\mathbb L}$ with the tangent bundle to the unit 2-sphere in Euclidean ${\mathbb R}^3$, by parallel translation. This bijection gives ${\mathbb L}$ the structure of a differentiable 4-manifold \cite{gak5}.
\end{Def}

Let ($\xi,\eta$) be holomorphic coordinates on $TS^2$, where $\xi$ is obtained by stereographic projection from the south pole onto the plane through the equator, and we identify ($\xi,\eta$) with the vector 
\[
\eta\frac{\partial}{\partial \xi}+\overline{\eta}\frac{\partial}{\partial \overline{\xi}}\in T_\xi S^2.
\]

\begin{Prop}\cite{gak2}
The map $\Phi$ takes ($\xi,\eta$)$\in TS^2$ to the oriented line given by
\begin{equation}\label{e:coord1}
z=\frac{2(\eta-\overline{\eta}\xi^2)+2\xi(1+\xi\overline{\xi})r}{(1+\xi\overline{\xi})^2}
\end{equation}
\begin{equation}\label{e:coord2}
t=\frac{-2(\eta\overline{\xi}+\overline{\eta}\xi)+(1-\xi^2\overline{\xi}^2)r}{(1+\xi\overline{\xi})^2},
\end{equation}
where $z=x^1+ix^2$, $t=x^3$, ($x^1,x^2,x^3$) are Euclidean coordinates on ${\mathbb R}^3={\mathbb{C}}\oplus{\mathbb{R}}$ and $r$ is an affine parameter along the line.
\end{Prop}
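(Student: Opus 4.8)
The plan is to verify the two coordinate formulas by a direct computation, using the geometric description of $\Phi$ as the map that records an oriented line by its direction together with the foot of the perpendicular dropped from the origin. An oriented line $\ell$ is determined by its unit direction vector $\mathbf{u}\in S^2$ and by the point $p$ closest to the origin; since $p\cdot\mathbf{u}=0$, the displacement $p$ lies in the tangent plane $T_\mathbf{u}S^2\cong\{v\in{\mathbb R}^3:v\cdot\mathbf{u}=0\}$, and under parallel translation this is exactly the tangent vector $\Phi^{-1}(\ell)$. The line is then parametrized by $r\mapsto p+r\mathbf{u}$, with $r$ the affine parameter. Thus it suffices to express $\mathbf{u}$ and $p$ in the coordinate $\xi$ and to match them against the claimed expressions, separating in each of (\ref{e:coord1}) and (\ref{e:coord2}) the coefficient of $r$ (the direction) from the $r$-independent term (the displacement).

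First I would write down the unit direction. Inverting stereographic projection from the south pole gives the complex and real components
\[
w_u=\frac{2\xi}{1+\xi\overline{\xi}},\qquad Z_u=\frac{1-\xi\overline{\xi}}{1+\xi\overline{\xi}},
\]
where $w_u=u^1+iu^2$ and $Z_u=u^3$. One checks that $w_u\overline{w_u}+Z_u^2=1$, so $\mathbf{u}\in S^2$, and that after clearing denominators these agree with the coefficients of $r$ in (\ref{e:coord1}) and (\ref{e:coord2}). This settles the direction part of both formulas.

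Next I would compute the displacement $p$. Under the stated identification, the tangent vector $\eta\,\partial/\partial\xi+\overline{\eta}\,\partial/\partial\overline{\xi}$ is carried by the differential of the embedding $\xi\mapsto\mathbf{u}(\xi)$ to the vector $p=\eta\,\partial_\xi\mathbf{u}+\overline{\eta}\,\partial_{\overline{\xi}}\mathbf{u}\in{\mathbb R}^3$, which is automatically orthogonal to $\mathbf{u}$. Differentiating the expressions above yields $\partial_\xi w_u=2/(1+\xi\overline{\xi})^2$, $\partial_{\overline{\xi}}w_u=-2\xi^2/(1+\xi\overline{\xi})^2$ and $\partial_\xi Z_u=-2\overline{\xi}/(1+\xi\overline{\xi})^2$, $\partial_{\overline{\xi}}Z_u=-2\xi/(1+\xi\overline{\xi})^2$. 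Substituting gives the complex component $2(\eta-\overline{\eta}\xi^2)/(1+\xi\overline{\xi})^2$ and the real component $-2(\eta\overline{\xi}+\overline{\eta}\xi)/(1+\xi\overline{\xi})^2$ of $p$, which are precisely the $r$-independent terms of (\ref{e:coord1}) and (\ref{e:coord2}). Adding $r\mathbf{u}$ then reproduces both formulas.

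The computation is routine once the set-up is fixed; the only genuine obstacle is pinning down the conventions so the formulas emerge exactly as stated. In particular I would confirm that $p\cdot\mathbf{u}=0$, so that the splitting $p+r\mathbf{u}$ is orthogonal and $r$ is a genuine affine parameter, and that the sign and scaling conventions in the south-pole stereographic chart and in the identification of $\eta$ with a tangent vector are mutually consistent. Getting the orientation of $\mathbf{u}$ and the factor of $2$ in the tangent identification right is where a careless sign would propagate through both displays; beyond that, the argument is pure substitution.
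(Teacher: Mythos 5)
Your verification is correct: the paper itself states this proposition without proof, citing \cite{gak2}, and your computation—splitting each formula into the coefficient of $r$ (the inverse stereographic projection giving the direction $\mathbf{u}$) and the $r$-independent term (the pushforward $\eta\,\partial_\xi\mathbf{u}+\overline{\eta}\,\partial_{\overline{\xi}}\mathbf{u}$, which is the perpendicular displacement since $|\mathbf{u}|^2=1$ forces $\mathbf{u}\cdot\partial\mathbf{u}=0$)—is exactly the standard derivation underlying the cited result. All the derivatives, signs, and the factorization $1-\xi^2\overline{\xi}^2=(1-\xi\overline{\xi})(1+\xi\overline{\xi})$ check out, so nothing is missing.
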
 

An {\it oriented line congruence} (or line congruence for short) is a surface $\Sigma\subset{\mathbb L}$. An oriented surface $S$ in ${\mathbb R}^3$ gives rise to a line congruence by its oriented normal lines. Conversely a line congruence is called {\it Lagrangian} if it is orthogonal to a (possibly singular) surface in ${\mathbb R}^3$. The term refers to the canonical symplectic structure on ${\mathbb L}$ which vanishes on such a surface - see \cite{gak5} for further details of the geometric background.

\vspace{0.1in}

\begin{Prop}
The oriented normals of a convex surface in ${\mathbb R}^3$ form a surface $\Sigma\subset{\mathbb L}$ which is the graph of a local section of the bundle $\pi:{\mathbb L}\rightarrow S^2$.
\end{Prop}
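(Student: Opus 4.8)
The plan is to realise the normal congruence of a convex surface $S$ as the image of a single map $S\to{\mathbb L}$, and to show that this map factors through the Gauss map of $S$, which for a convex surface is a local diffeomorphism onto the sphere of directions. First I would introduce the normal map ${\mathcal N}:S\to{\mathbb L}$ sending a point $p\in S$ to its oriented normal line, and record that in the coordinates of (\ref{e:coord1})--(\ref{e:coord2}) the bundle projection $\pi:{\mathbb L}\rightarrow S^2$ is simply $(\xi,\eta)\mapsto\xi$, since $\xi$ is the stereographic coordinate of the direction of the line. Because the direction of the normal line at $p$ is precisely the oriented unit normal $\nu(p)$, the composition $\pi\circ{\mathcal N}$ equals the Gauss map $\nu:S\to S^2$.

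The central step is to establish that $\nu$ is a local diffeomorphism. Here I would use that the differential of the Gauss map is the Weingarten (shape) operator of $S$, whose determinant is the Gaussian curvature. Convexity gives non-negative curvature, and strict convexity (or restriction to the open set where the curvature is positive) renders the shape operator non-degenerate, so $d\nu$ is an isomorphism and $\nu$ is a local diffeomorphism onto an open set $U\subset S^2$.

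With $\nu$ locally invertible, I would define the candidate section $s:={\mathcal N}\circ\nu^{-1}:U\to{\mathbb L}$ and verify that $\pi\circ s=\pi\circ{\mathcal N}\circ\nu^{-1}=\nu\circ\nu^{-1}=\mathrm{id}_U$, so that $s$ is indeed a local section of $\pi$. Its image coincides with ${\mathcal N}(\nu^{-1}(U))$, which is the portion of the normal congruence $\Sigma$ lying over $U$; hence $\Sigma$ is locally the graph of $s$, that is, locally of the form $\eta=\eta(\xi,\overline{\xi})$. That $\Sigma$ is a genuine immersed surface follows at once, since the injectivity of $d\nu=d\pi\circ d{\mathcal N}$ forces $d{\mathcal N}$ to be injective, so ${\mathcal N}$ is an immersion.

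I expect the main obstacle to be the curvature condition underlying the local diffeomorphism claim: at points where the Gaussian curvature vanishes the Gauss map degenerates and the congruence need not be a graph, which is exactly why the conclusion is phrased in terms of a \emph{local} section. Pinning down the class of convex surfaces for which $\nu$ is an honest local diffeomorphism---strictly convex, or convex with the argument localised to points of positive curvature---is the delicate point; the remaining verifications are routine once this is in place.
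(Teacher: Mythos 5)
Your proof is correct, and note that it fills a gap the paper deliberately leaves open: this Proposition is stated as background with no proof given (it is treated as standard material, with \cite{gak2} and \cite{gak5} as the references for the framework), so there is no paper argument to compare against. Your route is the standard one: factoring the normal map ${\mathcal N}:S\to{\mathbb L}$ through the Gauss map via $\pi\circ{\mathcal N}=\nu$, using that $d\nu$ is the shape operator with $\det(d\nu)=K$ to invert $\nu$ locally, and checking $\pi\circ s=\mathrm{id}$ for $s={\mathcal N}\circ\nu^{-1}$; your identification of $\pi$ with $(\xi,\eta)\mapsto\xi$ is also the right reading of the coordinates (\ref{e:coord1})--(\ref{e:coord2}). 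The one delicate point you flag --- that a convex surface with flat points ($K=0$) has a degenerate Gauss map, so the congruence need not be a graph there --- is genuine, but it is resolved by convention rather than by argument in this paper: ``convex'' is used throughout in the strict sense of positive Gaussian curvature, as is implicit in the subsequent remark that the surface is ``parameterised by the inverse of its Gauss map'' and in the construction of surfaces from support functions via (\ref{e:suppfunc}). So your localisation to points of positive curvature is exactly the intended hypothesis, and with that reading your argument is complete.
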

\vspace{0.1in}

Such congruences are given by $\eta=F(\xi,\bar{\xi})$, where $F$ is a complex-valued function on an open neighbourhood of $S^2$. For such local sections, a necessary and sufficient condition for the line congruence to be Lagrangian is: 
\vspace{0.1in}

\begin{Prop}
The graph of a local section of $\pi:{\mathbb L}\rightarrow S^2$ is Lagrangian iff the defining function $F$ satisfies
\begin{equation}\label{e:int}
\partial\left(\frac{F}{(1+\xi\bar{\xi})^2}\right)=
\bar{\partial}\left(\frac{\bar{F}}{(1+\xi\bar{\xi})^2}\right),
\end{equation}
where, here and throughout, we denote ${\textstyle{\frac{\partial}{\partial\xi}}}$ by $\partial$. In particular, there exists a real function $r:{\mathbb C}\rightarrow{\mathbb R}$, called the support function, satisfying
\begin{equation}\label{e:suppfunc}
\bar{\partial}r=\frac{2F}{(1+\xi\bar{\xi})^2}.
\end{equation}
\end{Prop}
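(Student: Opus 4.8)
The plan is to realise the Lagrangian condition as the vanishing of the pullback of the canonical symplectic form of ${\mathbb L}$ to the graph $\Sigma=\{\eta=F(\xi,\bar\xi)\}$, and then to read the resulting equation as the closedness of an exact real $1$-form whose potential is the support function. First I would recall from \cite{gak5} the canonical symplectic structure on ${\mathbb L}\cong TS^2$. Using the round metric $g=4(1+\xi\bar\xi)^{-2}\,d\xi\,d\bar\xi$ to identify $TS^2$ with $T^*S^2$, the tangent vector $\eta\frac{\partial}{\partial\xi}+\bar\eta\frac{\partial}{\partial\bar\xi}$ is sent to the covector
\[
\theta=\frac{2\bar\eta}{(1+\xi\bar\xi)^2}\,d\xi+\frac{2\eta}{(1+\xi\bar\xi)^2}\,d\bar\xi,
\]
and the symplectic form is $\Omega=d\theta$ (the sign being irrelevant for what follows). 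By definition the congruence $\Sigma$ is Lagrangian precisely when $\iota^*\Omega=0$, where $\iota:\xi\mapsto(\xi,F(\xi,\bar\xi))$ is the graph embedding.

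Next I would compute the pullback. Since $\iota$ merely substitutes $\eta=F$, the tautological form pulls back to
\[
\iota^*\theta=\frac{2\bar F}{(1+\xi\bar\xi)^2}\,d\xi+\frac{2F}{(1+\xi\bar\xi)^2}\,d\bar\xi=:\beta,
\]
and, as pullback commutes with $d$, the Lagrangian condition becomes $d\beta=0$. Writing $\beta=A\,d\xi+B\,d\bar\xi$ with $A=2\bar F(1+\xi\bar\xi)^{-2}$ and $B=2F(1+\xi\bar\xi)^{-2}$, one has $d\beta=(\partial B-\bar\partial A)\,d\xi\wedge d\bar\xi$, so $d\beta=0$ is exactly
\[
\partial\!\left(\frac{F}{(1+\xi\bar\xi)^2}\right)=\bar\partial\!\left(\frac{\bar F}{(1+\xi\bar\xi)^2}\right),
\]
which is (\ref{e:int}) after cancelling the factor $2$. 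This establishes the equivalence.

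For the \emph{in particular} statement, note that $\bar A=B$ because $(1+\xi\bar\xi)$ is real, so $\beta$ is a real $1$-form, and condition (\ref{e:int}) says precisely that $\beta$ is closed. As the $\xi$-plane is simply connected, the Poincar\'e lemma yields a real function $r$ with $dr=\beta$; reading off the $d\bar\xi$-component gives $\bar\partial r=B=2F(1+\xi\bar\xi)^{-2}$, which is (\ref{e:suppfunc}).

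I expect the only delicate point to be pinning down the exact normalisation of the tautological form, namely that the conformal factor of the round metric produces precisely the coefficient $2(1+\xi\bar\xi)^{-2}$; this is what forces the support-function equation to appear with the stated constant rather than an arbitrary multiple, and everything downstream is then determined. An equivalent route, avoiding the identification with $T^*S^2$, would be to expand the symplectic form of \cite{gak5} directly in the $(\xi,\eta)$ coordinates and restrict it to the graph. In that computation the terms coming from the derivatives of $(1+\xi\bar\xi)^{-2}$ are the part requiring care, and the content of the proof is that they reorganise into exactly the two total derivatives appearing in (\ref{e:int}).
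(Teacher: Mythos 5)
Your argument is correct, and it is worth noting that the paper itself offers no proof of this Proposition --- it is quoted as background from \cite{gak2} and \cite{gak5} --- so the comparison can only be with the standard derivation in those references, which your computation reproduces: the round metric $4(1+\xi\bar{\xi})^{-2}d\xi d\bar{\xi}$ gives $g(\partial_\xi,\partial_{\bar{\xi}})=2(1+\xi\bar{\xi})^{-2}$, so the metric dual of $\eta\partial_\xi+\bar{\eta}\partial_{\bar{\xi}}$ is exactly your $\theta$, the graph pullback is the real $1$-form $\beta$, and $d\beta=0$ is (\ref{e:int}) with the correct factor of $2$ surviving into (\ref{e:suppfunc}); the Poincar\'e lemma step and the reality of $r$ (up to an additive real constant, consistent with the paper's remark on parallel surfaces) are also handled correctly. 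One point you pass over silently: the paper's \emph{definition} of Lagrangian is geometric --- the congruence is orthogonal to a (possibly singular) surface in ${\mathbb R}^3$ --- and the vanishing of $\iota^*\Omega$ is only asserted there as an equivalent characterisation. Your proof takes the symplectic vanishing as the definition, which the paper's remark licenses, but a fully self-contained argument would close the loop by checking that your potential $r$ is precisely the affine parameter at which the orthogonal surface meets each line: substituting $r=r(\xi,\bar{\xi})$ into the coordinate expressions (\ref{e:coord1}) and (\ref{e:coord2}) and differentiating, the condition that the resulting map into ${\mathbb R}^3$ has image orthogonal to the line direction is exactly $\bar{\partial}r=2F(1+\xi\bar{\xi})^{-2}$. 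That direct computation is the alternative, chart-level route you mention at the end, and it is how the cited reference grounds the support function geometrically; your symplectic route is cleaner but buys its brevity by leaning on the equivalence of the two notions of Lagrangian. Also, strictly, simple connectivity of the $\xi$-plane suffices only for sections defined over the whole chart; for a general local section one should invoke the Poincar\'e lemma on a simply connected neighbourhood, which is all the Proposition's local statement requires.
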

\vspace{0.1in}

Given a support function $r$ and the associated $F$ defined by (\ref{e:suppfunc}), equations (\ref{e:coord1}) and (\ref{e:coord2}) yield a parameterised surface in ${\mathbb R}^3$. Thus we obtain a surface in 3-space parameterised by the inverse of its Gauss map. Note that $r$ is defined up to an additive constant - replacing $r$ by $r+C$ moves the surface along its normal lines to the parallel surface.

For a convex real analytic surface the support function is real analytic (as it can be defined by projection of the position vector onto the real analytic normal vector) and so the graph function $F$ defined by equation (\ref{e:suppfunc}) is also real analytic. Thus we expand it in a power series about $\xi=0$:
\begin{equation}\label{e:powerseries}
F=\sum_{n,m=0}^\infty A_{n,m}\xi^n\bar{\xi}^m,
\end{equation}
for $A_{n,m}\in{\mathbb{C}}$. 

\vspace{0.1in}

\begin{Prop}
The Lagrangian conditions (\ref{e:int}) restrict the coefficients of the power series by
\begin{equation}\label{e:lag1}
A_{1,0}=\overline{A}_{1,0}
\end{equation}
\begin{equation}\label{e:lag2}
(n+1)A_{n+1,0}=\overline{A}_{1,n}-2\overline{A}_{0,n-1}
\end{equation}
\begin{equation}\label{e:lag3}
(n+1)A_{n+1,m}+(n-2)A_{n,m-1}
     =(m+1)\overline{A}_{m+1,n}+(m-2)\overline{A}_{m,n-1},
\end{equation}
where $n,m\geq 1$.
\end{Prop}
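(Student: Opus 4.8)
The plan is to feed the power series (\ref{e:powerseries}) into the Lagrangian condition (\ref{e:int}) and to compare coefficients of a general monomial $\xi^p\bar\xi^q$ on the two sides. The first step is to clear denominators so that the comparison becomes an honest coefficient matching rather than a manipulation of rational expressions. Since $\partial(1+\xi\bar\xi)=\bar\xi$ and $\bar\partial(1+\xi\bar\xi)=\xi$, the product rule gives
\[
\partial\left(\frac{F}{(1+\xi\bar\xi)^2}\right)=\frac{\partial F}{(1+\xi\bar\xi)^2}-\frac{2\bar\xi F}{(1+\xi\bar\xi)^3},\qquad \bar\partial\left(\frac{\bar F}{(1+\xi\bar\xi)^2}\right)=\frac{\bar\partial\bar F}{(1+\xi\bar\xi)^2}-\frac{2\xi\bar F}{(1+\xi\bar\xi)^3}.
\]
Multiplying (\ref{e:int}) through by $(1+\xi\bar\xi)^3$ then reduces it to the denominator-free power-series identity
\[
(1+\xi\bar\xi)\,\partial F-2\bar\xi F=(1+\xi\bar\xi)\,\bar\partial\bar F-2\xi\bar F,
\]
which is what I would work with from here on.

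The second step is to substitute $F=\sum_{n,m}A_{nm}\xi^n\bar\xi^m$ and its conjugate $\bar F=\sum_{n,m}\overline{A}_{nm}\xi^m\bar\xi^n$ into this identity and collect the coefficient of $\xi^p\bar\xi^q$ on each side. On the left, $\partial F$ lowers the $\xi$-degree by one, while the monomial factor $\xi\bar\xi$ in $(1+\xi\bar\xi)\partial F$ and the factor $\bar\xi$ in $-2\bar\xi F$ raise degrees; after re-indexing I expect the left-hand coefficient of $\xi^p\bar\xi^q$ to collapse to $(p+1)A_{p+1,q}+(p-2)A_{p,q-1}$, the $-2$ coming precisely from combining the two $\xi^p\bar\xi^q$-producing terms. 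The identical computation on the right (with the roles of $\xi$ and $\bar\xi$ interchanged, acting on $\bar F$) yields $(q+1)\overline{A}_{q+1,p}+(q-2)\overline{A}_{q,p-1}$. Equating these for all $p,q\geq 0$ gives the single master relation
\[
(p+1)A_{p+1,q}+(p-2)A_{p,q-1}=(q+1)\overline{A}_{q+1,p}+(q-2)\overline{A}_{q,p-1},
\]
with the convention that any coefficient with a negative index is zero.

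The final step is to read off (\ref{e:lag1})--(\ref{e:lag3}) as specializations. Setting $p=q=0$ annihilates both negative-index terms and leaves $A_{10}=\overline{A}_{10}$, which is (\ref{e:lag1}); setting $q=0$ with $p=n\geq 1$ kills the $A_{p,q-1}$ term and gives $(n+1)A_{n+1,0}=\overline{A}_{1n}-2\overline{A}_{0,n-1}$, which is (\ref{e:lag2}); and $p=n\geq1$, $q=m\geq1$ reproduces (\ref{e:lag3}) verbatim. I would also observe that conjugating the master relation and swapping $p\leftrightarrow q$ returns the relation unchanged, so the cases $p=0,\ q\geq1$ are merely the conjugates of the cases $q=0,\ p\geq1$ and impose nothing new; this confirms that (\ref{e:lag1})--(\ref{e:lag3}) is a complete list of the constraints. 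The argument is entirely mechanical, and the only real obstacle is bookkeeping: one must track consistently the degree shifts produced by the two differentiations and by the factors $1$, $\xi\bar\xi$, and $\bar\xi$ (respectively $\xi$), and must isolate the degenerate low-order cases, where terms vanish for index reasons, in order to separate (\ref{e:lag1}) and (\ref{e:lag2}) from the generic relation (\ref{e:lag3}).
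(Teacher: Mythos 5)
Your proposal is correct and follows essentially the same route as the paper: both reduce (\ref{e:int}) to the denominator-free identity $(1+\xi\bar{\xi})\partial F-2\bar{\xi}F=(1+\xi\bar{\xi})\bar{\partial}\bar{F}-2\xi\bar{F}$ and then match coefficients of $\xi^p\bar{\xi}^q$ in the power series. The only difference is organizational --- you state a single master relation with the negative-index-vanishing convention and specialize, while the paper collects the constant, pure $\xi^m$, pure $\bar{\xi}^m$, and mixed terms separately --- and your closing remark that the $p=0$, $q\geq 1$ cases are conjugates of the $q=0$ cases makes explicit a completeness check the paper leaves implicit.
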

\begin{proof}
Suppose that
\[
F=\sum_{n,m=0}^\infty A_{n,m}\xi^n\bar{\xi}^m,
\]
and differentiating
\[
\partial F=\sum_{n,m=0}^\infty nA_{n,m}\xi^{n-1}\bar{\xi}^m.
\]
Then
\begin{align}
(1+\xi\bar{\xi})\partial F-2\bar{\xi}F&= \sum_{n,m=0}^\infty nA_{n,m}\xi^{n-1}\bar{\xi}^m+(n-2)A_{n,m}\xi^{n}\bar{\xi}^{m+1}\nonumber\\
&=\sum_{n=1}^\infty nA_{n,0}\xi^{n-1}+\sum_{m=1}^\infty  A_{1,m}\bar{\xi}^m+\sum_{n=2}^\infty\sum_{m=1}^\infty  nA_{n,m}\xi^{n-1}\bar{\xi}^m\nonumber\\
&\qquad  -\sum_{m=1}^\infty 2A_{0,m-1}\bar{\xi}^m+\sum_{n,m=1}^\infty (n-2)A_{n,m-1}\xi^n\bar{\xi}^m\nonumber\\
&=A_{10}+\sum_{m=1}^\infty (m+1)A_{m+1,0}\xi^{m}+\sum_{m=1}^\infty  (A_{1,m}-2A_{0,m-1})\bar{\xi}^m\nonumber\\
&\qquad +\sum_{n,m=1}^\infty [(n+1)A_{n+1,m}+(n-2)A_{n,m-1}]\xi^n\bar{\xi}^m.\nonumber
\end{align}
The result then follows from noting that the condition (\ref{e:int}) is equivalent to
\[
(1+\xi\bar{\xi})\partial F-2\bar{\xi}F=(1+\xi\bar{\xi})\bar{\partial} \bar{F}-2\xi\bar{F},
\]
and comparing terms in the above expression.
\end{proof}

\vspace{0.1in}

We now rearrange these conditions into a more convenient form.

\begin{Prop}\label{p:lagcon}
The Lagrangian conditions can be rewritten
\[
\begin{matrix}
A_{n,n-1}=\bar{A}_{n,n-1} & {\mbox{I}}\\
2(n-2)A_{0,n-2}-2(n-1)A_{1,n-1}+2nA_{2,n}=n(n+1)\bar{A}_{n+1,1} &II \\
(n+2)A_{n+2,2}+(n-1)A_{n+1,1}=3\bar{A}_{3,n+1}  &III \\
(n+2)A_{n+2,m+1}+(n-1)A_{n+1,m}=(m+2)\bar{A}_{m+2,n+1}+(m-1)\bar{A}_{m+1,n}  &IV\\
nA_{n,0}=\bar{A}_{1,n-1}-2\bar{A}_{0,n-2}.  & V\\
\end{matrix},
\]
for $n>1, m<n$.
\end{Prop}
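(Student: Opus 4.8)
The plan is to show that the five relations I--V are nothing more than a bookkeeping rearrangement of (\ref{e:lag1}), (\ref{e:lag2}) and (\ref{e:lag3}): most of them are obtained by shifting indices, one is the diagonal instance, and the only genuinely new step is an elimination using (\ref{e:lag2}). Since every manipulation involved is reversible, this will simultaneously prove that the two systems are equivalent.

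First I would dispose of the relations that need only a relabelling. Replacing $n$ by $n-1$ in (\ref{e:lag2}) gives $nA_{n\,0}=\overline{A}_{1\,n-1}-2\overline{A}_{0\,n-2}$, which is exactly V. Replacing $n$ by $n+1$ and $m$ by $m+1$ in (\ref{e:lag3}) turns its coefficients $(n+1),(n-2)$ and $(m+1),(m-2)$ into $(n+2),(n-1)$ and $(m+2),(m-1)$, reproducing IV verbatim. Condition III is then the instance $m=1$ of IV, where the coefficient $(m-1)$ vanishes and the term $\overline{A}_{m+1\,n}$ drops out; equivalently it is the $m=2$ case of (\ref{e:lag3}). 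At this point I would also record the symmetry that (\ref{e:lag3}) is invariant under the simultaneous interchange $n\leftrightarrow m$ and complex conjugation, which shows that the instances with $m>n$ carry no information beyond those with $m\le n$ and justifies the range $m<n$.

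Next I would produce the reality statement I from the diagonal. Setting $m=n$ in (\ref{e:lag3}) and writing $B_k=A_{k\,k-1}-\overline{A}_{k\,k-1}$, the relation collapses to the recursion $(n+1)B_{n+1}+(n-2)B_n=0$. The case $n=2$ forces $B_3=0$, and the recursion then propagates $B_k=0$ for all $k\ge3$; the case $n=1$ reads $2B_2=B_1$, while (\ref{e:lag1}) supplies $B_1=0$ and hence $B_2=0$. Thus $B_k=0$ for every $k$, which is precisely the assertion of I that each $A_{n\,n-1}$ is real.

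The step I expect to be the main obstacle is II, since it is the only relation that genuinely fuses two of the original conditions rather than reindexing a single one. I would start from the lowest first-index instance $n=1$ of (\ref{e:lag3}), namely $2A_{2\,m}-A_{1\,m-1}=(m+1)\overline{A}_{m+1\,1}+(m-2)\overline{A}_{m\,0}$, and then eliminate the boundary coefficient $\overline{A}_{m\,0}$ using the conjugate of V, $m\overline{A}_{m\,0}=A_{1\,m-1}-2A_{0\,m-2}$. Clearing the denominator and collecting the two $A_{1\,m-1}$ contributions yields $2(m-2)A_{0\,m-2}-2(m-1)A_{1\,m-1}+2mA_{2\,m}=m(m+1)\overline{A}_{m+1\,1}$, which is II after renaming $m$ to $n$. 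The only care required is in tracking the integer coefficients and in noting the ranges where the boundary terms $A_{*\,0}$ occur; once V is in hand to trade $A_{m\,0}$ for lower-index coefficients the identity falls out, and the reversibility of this substitution, together with the reverse relabellings above, gives the converse implication and hence the claimed equivalence.
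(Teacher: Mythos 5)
Your proposal is correct and follows what is essentially the paper's own (implicit) argument: the paper states Proposition \ref{p:lagcon} as a mere rearrangement of (\ref{e:lag1})--(\ref{e:lag3}) without writing out a proof, and your reindexings (V from (\ref{e:lag2}), IV from a shift of (\ref{e:lag3}), III as its $m=1$ instance), the conjugation symmetry justifying $m<n$, the diagonal recursion $(n+1)B_{n+1}+(n-2)B_n=0$ for I, and the elimination of $\bar{A}_{m\,0}$ via V to get II all check out. If anything, your derivation is slightly more careful than the paper, since it makes explicit that I at $n=2$ (the reality of $A_{21}$) genuinely requires the seed condition (\ref{e:lag1}) through $2B_2=B_1$, and that the substitutions are reversible, so the two systems are equivalent.
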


\vspace{0.1in}

From here on, we assume that  $S$  is a convex real analytic surface in ${\mathbb R}^3$ and the oriented normals are given by $\xi\rightarrow(\xi,\eta=F(\xi,\bar{\xi}))$, with power series expansion (\ref{e:powerseries}) satisfying I to V of Proposition \ref{p:lagcon}.

We are interested in the umbilic points on $S$, that is, the points where the second fundamental form of $S$ has a double eigenvalue. In our formalism, these points can be characterised as follows:

\vspace{0.1in}

\begin{Prop}\cite{gak2}
A point $p$ on $S$ is umbilic iff $\bar{\partial}F(\gamma)=0$, where $\gamma$ is the oriented normal to $S$ at $p$.
\end{Prop}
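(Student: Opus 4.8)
The plan is to compute the first and second fundamental forms of $S$ directly from the Gauss-map parameterisation and to isolate the trace-free part of the second fundamental form, which vanishes precisely at umbilic points. First I would fix a normal direction, i.e.\ a point $\xi\in S^2$, and write the corresponding surface point as $X(\xi,\bar\xi)\in{\mathbb R}^3$ using equations (\ref{e:coord1}) and (\ref{e:coord2}) with $\eta=F$ and $r$ the support function determined by (\ref{e:suppfunc}). Differentiating gives the tangent frame $\partial X,\bar\partial X$ and hence the induced metric $g$. Crucially, the unit normal at $X(\xi,\bar\xi)$ is by construction the direction labelled by $\xi$, so the Gauss map is the identity in these coordinates and $dN$ is explicit in terms of $d\xi$ and the round conformal factor $(1+\xi\bar\xi)^{-2}$; the shape operator is then recovered by comparing $dX$ with $dN$.

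Next I would assemble the second fundamental form $h(\cdot,\cdot)=g(W\cdot,\cdot)$, where $W=-dN\circ(dX)^{-1}$, as a real quadratic expression in $(d\xi,d\bar\xi)$. Relative to the complex structure on the tangent plane this splits into a $(1,1)$ part, proportional to the mean curvature, and a trace-free part of type $(2,0)+(0,2)$---the analogue of the Hopf differential---which by the reality of $h$ is determined by a single complex coefficient. A point is umbilic exactly when the two principal curvatures agree, i.e.\ when $h$ is proportional to $g$, which happens iff this trace-free coefficient vanishes. The core computation is then to show that this coefficient is a nowhere-vanishing multiple of $\bar\partial F$.

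At this stage the support-function relation (\ref{e:suppfunc}), which encodes the Lagrangian condition (\ref{e:int}), does the essential work: it lets me trade every derivative of $r$ appearing in $h$ for $F$ and its derivatives, so that the trace-free coefficient collapses to a nonzero multiple of $\bar\partial F$. The Lagrangian condition is also what makes the whole construction well-posed, since it is exactly the integrability guaranteeing that the normal lines $(\xi,F)$ are orthogonal to an actual surface in ${\mathbb R}^3$. Finally, identifying the oriented normal $\gamma$ at $p$ with its base point $\xi\in S^2$, the umbilic condition becomes $\bar\partial F(\gamma)=0$, as claimed.

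I expect the main obstacle to be the bookkeeping in the middle step: computing $\partial X,\bar\partial X$ from (\ref{e:coord1})--(\ref{e:coord2}), inverting the frame to build $W$, and extracting the trace-free coefficient cleanly enough to recognise $\bar\partial F$. The conformal weights $(1+\xi\bar\xi)^{-2}$ proliferate through these derivatives, and keeping them organised---while invoking (\ref{e:suppfunc}) at the right moment to eliminate $r$ in favour of $F$---is where the calculation is most error-prone.
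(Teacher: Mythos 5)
Your proposal is sound and follows essentially the route of the paper's source: the paper states this proposition without proof, citing \cite{gak2}, where it is established precisely by your plan --- parameterising the surface by the inverse of its Gauss map via (\ref{e:coord1})--(\ref{e:coord2}), computing the fundamental forms, and identifying the trace-free (shear) coefficient of the second fundamental form with a nowhere-vanishing multiple of $\bar{\partial}F$ after eliminating $r$ through (\ref{e:suppfunc}). Your use of the Gauss-map coordinate's conformal structure in place of the induced metric's is legitimate here since convexity makes the shape operator invertible, so the $(2,0)$-coefficient of $h$ in the $\xi$-coordinate vanishes exactly at umbilics.
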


\vspace{0.1in}

Suppose that $S$ has an isolated umbilic point situated at the point with $\xi=0$, and define the polar coordinates $(R,\theta)$
by $\xi=Re^{i\theta}$.

\begin{Def}
The {\it index} $I$ of the isolated umbilic point at 0 is
\[
I=\lim_{R\rightarrow 0}{\textstyle{\frac{1}{8\pi i}}}
      \int_0^{2\pi}\frac{\partial}{\partial \theta}\ln\left(\frac{\bar{\partial}F}{\partial\bar{F}}\right)d\theta.
\]
\end{Def}
\vspace{0.1in}

\begin{Ex}
For the triaxial ellipsoid with semi-axes $a_1$, $a_2$ and $a_3$ the support function is
\[
r=\left[a_1\left(\frac{\xi+\overline{\xi}}{1+\xi\overline{\xi}}\right)^2
        -a_2\left(\frac{\xi-\overline{\xi}}{1+\xi\overline{\xi}}\right)^2
        +a_3\left(\frac{1-\xi\overline{\xi}}{1+\xi\overline{\xi}}\right)^2\right]^{\scriptstyle{\frac{1}{2}}},
\]
and the Lagrangian section is
\[
F=\frac{a_1(\xi+\overline{\xi})(1-\xi^2)+a_2(\xi-\overline{\xi})(1+\xi^2)
    -2a_3\xi(1-\xi\overline{\xi})}
     {2\left[a_1(\xi+\overline{\xi})^2-a_2(\xi-\overline{\xi})^2
     +a_3(1-\xi\overline{\xi})^2\right]^{\scriptstyle{\frac{1}{2}}}}.
\]
For $a_1=a_2\neq a_3$ the surface is the rotationally symmetric ellipsoid (the $x^3-$axis being the axis of symmetry). For $a_1=a_2=a_3$ the surface is the round 2-sphere about the origin of radius $\sqrt{a_1}$ which is totally umbilic.

For the rotationally symmetric ellipsoid $a_1=a_2=a\neq a_3$
\[
r=\frac{[4aR^2+a_3(1-R^2)^2]^{\scriptstyle{\frac{1}{2}}}}{1+R^2}
\qquad\qquad
F=\frac{(a-a_3)R(1-R^2)e^{i\theta}}{[4aR^2+a_3(1-R^2)^2]^{\scriptstyle{\frac{1}{2}}}},
\]
in polar coordinates $\xi=Re^{i\theta}$.

Computing the derivative of $F$ 
\[
\bar{\partial}F=\frac{2a(a-a_3)R^2(1+R^2)e^{2i\theta}}{[4aR^2+a_3(1-R^2)^2]^{\scriptstyle{\frac{3}{2}}}},
\]
and so the umbilics lie at $R=0$ and $R\rightarrow\infty$.

To compute the index of the umbilic at $R=0$ we find
\[
I=\lim_{R\rightarrow 0}{\textstyle{\frac{1}{8\pi i}}}
      \int_0^{2\pi}\frac{\partial}{\partial \theta}\ln\left(\frac{\bar{\partial}F}{\partial\bar{F}}\right)d\theta={\textstyle{\frac{1}{2\pi }}}\int_0^{2\pi}d\theta=1.
\]
On the other hand, for the triaxial ellipsoid $a_1\neq a_2\neq a_3$, a similar calculation shows that there are 4 umbilic points which lie at $\theta=0$ and
\[
R=\pm\left[-{{\frac{(a_1+a_2)a_3-2a_1a_2}{(a_1-a_2)a_3}}}\pm\left[\left({{\frac{(a_1+a_2)a_3-2a_1a_2}{(a_1-a_2)a_3}}}\right)^2-1\right]^{\scriptstyle{\frac{1}{2}}}\right]^{\scriptstyle{\frac{1}{2}}}.
\]
Denote either of these values by $R=R_0$. A rotation
\[
\xi\rightarrow\frac{\xi-R_0}{1+R_0\xi} \qquad\qquad F\rightarrow \frac{(1+R_0^2)F}{(1+R_0\xi)^2},
\]
moves this umbilic to the origin $R=0$ and a lengthy computation yields
\[
I=\lim_{R\rightarrow 0}{\textstyle{\frac{1}{8\pi i}}}
      \int_0^{2\pi}\frac{\partial}{\partial \theta}\ln\left(\frac{\bar{\partial}F}{\partial\bar{F}}\right)d\theta
={\textstyle{\frac{1}{4\pi }}}\int_0^{2\pi}d\theta
={\textstyle{\frac{1}{2 }}}.
\]

\end{Ex}
\vspace{0.2in}

For any section $F$, define the complex-valued functions $G_k(\theta)$ by the polar decomposition of $\bar{\partial}F$:
\[
\bar{\partial}F=\sum_{k=0}^\infty G_k(\theta)e^{-ik\theta}R^k.
\]
Then
\[
G_k(\theta)=\sum_{n=0}^k(k-n+1)A_{n,k-n+1}e^{2in\theta},
\]
and, in particular, $G_0=A_{0,1}=\bar{\partial}F(0)$, which vanishes iff $\xi=0$ is an umbilic point of $S$.

\begin{Def}
An isolated umbilic point at $\xi=0$ is said to be {\it non-degenerate of order} $N$ if
\[
G_k(\theta)=0 \qquad {\mbox{for}}\;k<N
\qquad\qquad {\mbox{and}} \qquad\qquad
G_N(\theta)\neq0 \qquad {\mbox{for all}}\;\theta.
\]
For such an umbilic point, define the degree $N$ polynomial
\[
P_N=\sum_{n=0}^N(N-n+1)\frac{A_{n,N-n+1}}{A_{N,1}}\zeta^{n}.
\]
Here and throughout we assume that $A_{N,1}\neq0$. 
\end{Def}

\vspace{0.1in}
\begin{Prop}
For a non-degenerate isolated umbilic point of order N, the polynomial $P_N$ does not have any zeros on the unit circle $|\zeta|=1$ and the index
of the umbilic point is
\[
I=K-N/2,
\]
where $K$ is the number of zeros of $P_N$ that lie inside the unit circle.
\end{Prop}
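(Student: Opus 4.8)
The plan is to interpret the index integral as half a winding number and then evaluate that winding number via the argument principle. First I would simplify the integrand. Since $\partial\bar F=\overline{\bar\partial F}$, writing $\bar\partial F=\rho e^{i\phi}$ with $\rho=|\bar\partial F|$ and $\phi=\arg(\bar\partial F)$ gives $\bar\partial F/\partial\bar F=e^{2i\phi}$, so that $\ln(\bar\partial F/\partial\bar F)=2i\phi$ and
\[
I=\lim_{R\to0}\frac{1}{4\pi}\int_0^{2\pi}\frac{\partial\phi}{\partial\theta}\,d\theta.
\]
For each fixed small $R>0$ this integral is exactly half the winding number about the origin of the loop $\theta\mapsto\bar\partial F(R,\theta)$, that is, an integer divided by $2$; the content of the proposition is then to identify this winding number.

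Next I would use the non-degeneracy hypothesis to replace $\bar\partial F$ by its leading term. From the polar expansion, $\bar\partial F=R^N\big(G_N(\theta)e^{-iN\theta}+O(R)\big)$, since $G_k\equiv0$ for $k<N$. Because $G_N(\theta)\neq0$ for all $\theta$ and the circle is compact, $|G_N|$ is bounded below by a positive constant, so for all sufficiently small $R$ the loop $\theta\mapsto\bar\partial F(R,\theta)/R^N$ is a nonvanishing perturbation of $\theta\mapsto G_N(\theta)e^{-iN\theta}$ and is homotopic to it through nonvanishing loops. Hence the winding number is independent of $R$ for small $R$, equals that of $G_N(\theta)e^{-iN\theta}$, and the limit defining $I$ is attained. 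It remains to read off this winding number, and for that I would record the identity $G_N(\theta)=A_{N1}\,P_N(e^{2i\theta})$, immediate from the stated formula for $G_N$ and the definition of $P_N$ (the top coefficient being $A_{N1}/A_{N1}=1$, so $P_N$ is monic of degree $N$). In particular $G_N(\theta)\neq0$ forces $P_N(e^{2i\theta})\neq0$, which is precisely the assertion that $P_N$ has no zeros on the unit circle.

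Finally I would compute the winding number of $\theta\mapsto A_{N1}\,P_N(e^{2i\theta})\,e^{-iN\theta}$ over $\theta\in[0,2\pi]$ by splitting it into three factors. The constant $A_{N1}$ contributes nothing, and the factor $e^{-iN\theta}$ contributes winding number $-N$. For the factor $P_N(e^{2i\theta})$, as $\theta$ runs over $[0,2\pi]$ the point $e^{2i\theta}$ traverses the unit circle counterclockwise twice, so by the argument principle applied to the polynomial $P_N$ (holomorphic, no zeros on $|\zeta|=1$) the change in $\arg P_N$ is $2\cdot2\pi K$, i.e.\ winding number $2K$, where $K$ is the number of zeros of $P_N$ inside the unit circle counted with multiplicity. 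Adding the contributions gives total winding number $2K-N$, whence
\[
I=\tfrac12(2K-N)=K-\tfrac{N}{2}.
\]

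I expect the main obstacle to be the second step: justifying rigorously that the lower-order terms in the expansion of $\bar\partial F$ do not alter the winding number, so that passing to the limit $R\to0$ is harmless. This is where the full strength of non-degeneracy of order $N$ is used, namely the uniform nonvanishing of $G_N$ on the \emph{whole} circle rather than at isolated angles, via the compactness argument together with homotopy invariance of the winding number; without it the leading term could vanish for some $\theta$ and the reduction to $G_N(\theta)e^{-iN\theta}$ would break down.
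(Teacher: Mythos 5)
Your proposal is correct and takes essentially the same route as the paper: both reduce the index integral to half the winding number of the leading term $G_N(\theta)e^{-iN\theta}$, use the identity $G_N(\theta)=A_{N1}P_N(e^{2i\theta})$ to get the no-zeros-on-the-circle statement, and apply the argument principle with $e^{2i\theta}$ traversing the unit circle twice to obtain $I=\tfrac{1}{2}(2K-N)=K-N/2$. Your compactness-and-homotopy justification for passing to the limit $R\to 0$ simply makes explicit a step the paper carries out by direct substitution of the leading term into the integral.
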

\begin{proof}
On the unit circle $\zeta=e^{2i\theta}$ we have
\[
P_N=\frac{G_N(\theta)}{A_{N,1}}\neq0,
\]
which proves the first statement. For the second statement
\begin{align}
I&=\lim_{R\rightarrow 0}{\textstyle{\frac{1}{8\pi i}}}
      \int_0^{2\pi}\frac{\partial}{\partial \theta}\ln\left(\frac{\bar{\partial}F}{\partial\bar{F}}\right)d\theta\nonumber\\
&={\textstyle{\frac{1}{8\pi i}}}\int_0^{2\pi}\frac{\partial}{\partial \theta}\ln\left(\frac{G_N(\theta)e^{-iN\theta}}{\bar{G}_N(\theta)e^{iN\theta}}\right)d\theta\nonumber\\
&={\textstyle{\frac{1}{8\pi i}}}\int_0^{2\pi}\frac{\partial}{\partial \theta}\ln\left(\frac{P_N}{\bar{P}_N}\right)d\theta
    -N/2\nonumber\\
&=K-N/2,\nonumber
\end{align}
as claimed.
\end{proof}

\vspace{0.1in}

However, we also have
\vspace{0.1in}

\begin{Thm}\cite{ham}
The index of an isolated umbilic point on a real analytic convex surface in ${\mathbb R}^3$ is less than or equal to 1.
\end{Thm}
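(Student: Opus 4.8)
The plan is to convert the index bound into a statement about the winding number of a single complex function and then to constrain that winding number using the Lagrangian structure together with convexity. Working in the coordinate $\xi$ with the isolated umbilic at $\xi=0$, the earlier characterisation shows that umbilics are exactly the zeros of $\bar\partial F$. Unwinding the definition of the index and using $\partial\bar F=\overline{\bar\partial F}$, the integrand becomes $\partial_\theta\ln(\Psi/\bar\Psi)=2i\,\partial_\theta\arg\Psi$ with $\Psi:=\bar\partial F$, so that
\[
I=\frac{1}{4\pi}\int_0^{2\pi}\frac{\partial}{\partial\theta}\arg\bigl(\bar\partial F\bigr)\,d\theta=\frac{w}{2},
\]
where $w$ is the winding number of $\bar\partial F$ about the umbilic. (This is consistent with the worked example, where $\bar\partial F\sim\xi^2$ and $I=1$.) Since $F$ is real analytic, $w$ is a well-defined integer, and the assertion $I\le 1$ is equivalent to $w\le 2$. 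As the index of a line field is a priori a half-integer, it suffices to exclude $I\ge 3/2$, i.e.\ $w\ge 3$.

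First I would establish the lower quantisation $w\ge 1$ by exhibiting $\bar\partial F$ as a \emph{generalised analytic function} in the sense of Carleman--Bers--Vekua. Differentiating the Lagrangian conditions (\ref{e:int})--(\ref{e:suppfunc}) once more and eliminating the trace (mean-curvature) part, one expects $\Psi=\bar\partial F$ to satisfy an elliptic first-order system
\[
\bar\partial\bigl(\bar\partial F\bigr)=a\,\bar\partial F+b\,\overline{\bar\partial F}
\]
with real-analytic coefficients $a,b$. The similarity principle then writes $\bar\partial F=e^{s}h$ with $h$ holomorphic and $s$ continuous, so that $w$ equals the order of the zero of $h$, a positive integer. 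This already forces $I\in\{\tfrac12,1,\tfrac32,\dots\}$, but it gives no upper bound, since generalised analytic functions may vanish to arbitrarily high order.

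The entire content of Hamburger's theorem is the upper bound $w\le 2$, which does \emph{not} follow from pseudo-analyticity and requires the convexity hypothesis in an essential way. Here I would exploit the framework of the paper and pass to the neutral K\"ahler structure on $\mathbb L\cong TS^2$, setting up a Riemann--Hilbert boundary value problem for holomorphic discs with boundary on the normal congruence $\Sigma$, chosen so that the Maslov index of the boundary condition is an increasing function of the umbilic index $I$. Assuming for contradiction that $I\ge 3/2$, the associated Fredholm problem acquires positive expected dimension, and an existence argument (a version of Gromov's theory of pseudoholomorphic curves adapted to the split-signature setting) produces a holomorphic disc. Convexity of $S$ would then be used to show that no such disc can occur, yielding the contradiction and hence $w\le 2$.

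I expect the existence and compactness theory for holomorphic discs in the neutral K\"ahler $4$-manifold to be the main obstacle, precisely because the ambient metric has split signature: the usual energy bounds and Gromov compactness are unavailable, so the relevant moduli space must be controlled directly through a priori gradient estimates and a careful analysis of boundary bubbling. Proving that the linearised operator is Fredholm-regular, and that no bubbling degenerates the disc, is where the real difficulty lies; by comparison the reduction of the first two paragraphs and the sign input supplied by convexity are routine once this analytic core is in place.
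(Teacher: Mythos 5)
This statement is not proved in the paper at all: it is Hamburger's theorem, quoted with the citation \cite{ham}, and its actual proof is a several-hundred-page direct real-analytic study of the index of the principal foliation near an isolated umbilic, carried out in the surface itself --- nothing like the route you sketch. So the honest comparison is between your proposal and Hamburger's argument, and your proposal is a research programme rather than a proof. Your final two paragraphs explicitly defer the entire content of the theorem --- existence of the holomorphic disc, Fredholm regularity, a priori gradient estimates, and exclusion of boundary bubbling in a split-signature ambient structure where energy bounds and Gromov compactness fail --- and these are precisely the points that are not available in the literature. (This disc-and-Maslov-index strategy in the neutral K\"ahler structure on ${\mathbb L}$ is in fact the present authors' own separate programme for the smooth Carath\'eodory conjecture, and it has not been established in refereed form; invoking it here is circular in spirit and, as you concede, analytically incomplete.)

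There is also a concrete error in the part you present as routine. The ``lower quantisation'' $w\geq 1$ is false: a convex surface can carry umbilics of index $-\tfrac{1}{2}$ (the generic star pattern --- only the \emph{sum} of indices is forced to be $2$), which corresponds to winding number $w=-1$ for $\bar{\partial}F$. Consequently $\bar{\partial}F$ cannot satisfy a Vekua-type equation $\bar{\partial}\Psi=a\Psi+b\overline{\Psi}$ with continuous coefficients, since the similarity principle $\Psi=e^{s}h$, $h$ holomorphic, would force nonnegative winding; your proposed derivation of such an equation from (\ref{e:int})--(\ref{e:suppfunc}) would fail at the step of eliminating the trace part, because the resulting system involves $\overline{\bar{\partial}F}$ through a term that is not of the required conjugate-linear form. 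So even the half-integer quantisation of $I$ should be obtained differently (it follows simply from $\bar{\partial}F/\partial\bar{F}$ having unit modulus, so the integrand is $2i\,\partial_\theta\arg\bar{\partial}F$ and the index is half an integer winding number), and the upper bound $I\leq 1$ remains entirely unproven in your sketch. As it stands, the proposal neither reproduces Hamburger's proof nor supplies a self-contained alternative.
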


\vspace{0.1in}

As a corollary, conclude 

\vspace{0.1in}

\begin{Cor}
For a non-degenerate umbilic point of order $N$ on a real analytic surface, the number of zeros of the polynomial $P_N$ that lie
inside the unit circle is less than or equal to $1+N/2$. 
\end{Cor}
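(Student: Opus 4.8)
The plan is to obtain the bound by combining the two results that immediately precede the statement: the index identity $I = K - N/2$ and Hamburger's inequality $I \leq 1$. No new construction is required; the substance of the proof lies entirely in checking that the hypotheses of both ingredients are simultaneously in force, after which the conclusion is a one-line rearrangement.

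First I would invoke the standing assumption of this section, namely that $S$ is a convex real analytic surface whose oriented normals are the graph of a Lagrangian section $F$ with power series expansion (\ref{e:powerseries}). It is this convexity that licenses Hamburger's theorem, and it is the only hypothesis that does any real work; everything else is algebraic bookkeeping already carried out earlier. The assumption that the umbilic at $\xi = 0$ is non-degenerate of order $N$ then lets me apply the preceding Proposition, which delivers two facts at once: the polynomial $P_N$ has no zero on the unit circle $|\zeta| = 1$, so that the count $K$ of interior zeros is unambiguous, and the index of the umbilic equals exactly $I = K - N/2$.

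With these in hand the argument closes immediately. Hamburger's theorem applied to the convex surface $S$ gives $I \leq 1$, whence $K - N/2 = I \leq 1$, and rearranging yields $K \leq 1 + N/2$, as claimed. The single point that warrants care — and the only place where the argument could fail — is precisely the convexity needed for Hamburger's bound: the index identity $I = K - N/2$ holds for \emph{any} real analytic surface carrying such an umbilic, whereas the upper bound $I \leq 1$ does not survive in that generality. The wording ``real analytic surface'' in the statement must therefore be read against the convexity hypothesis standing throughout this section, and one must keep that hypothesis genuinely in play rather than drop to an arbitrary real analytic surface.
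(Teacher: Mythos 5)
Your proof is correct and is essentially the paper's own (implicit) argument: the corollary is obtained exactly by combining the preceding Proposition's identity $I=K-N/2$ with Hamburger's bound $I\leq 1$ under the standing convexity hypothesis of the section, giving $K\leq 1+N/2$. Your observation that the convexity assumption is the one load-bearing hypothesis --- and that it must be read into the statement's phrase ``real analytic surface'' --- is accurate and indeed the only point the paper leaves tacit.
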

\vspace{0.1in}

Of course the polynomial $P_N$ is not arbitrary: its coefficients satisfy the relations which are contained in the Lagrangian 
conditions I to V of Proposition \ref{p:lagcon}. To prove our Main Theorem we reverse the preceding argument: given a polynomial 
$P_N$ satisfying the order $N$ Lagrangian conditions, we construct a convex surface in ${\mathbb R}^3$ with isolated umbilic point
for which $P_N$ is the lowest order term in $R$ of $\bar{\partial} F$. This we do in the next section.

\vspace{0.2in}

\section{Proof of the Main Theorem}

Our first task is to determine the restrictions placed on the coefficients of the polynomial $P_N$ by the Lagrangian conditions I to V.
To understand these conditions, define the Fourier decomposition of $\bar{\partial}F$ about the origin by
\[
\bar{\partial}F=\sum_{k=-\infty}^\infty\gamma_k(R)e^{ik\theta},
\]
where
\[
\gamma_k(R)=\frac{1}{2\pi}\int_{0}^{2\pi}\bar{\partial}F\;e^{-ik\theta}d\theta.
\]
In Figure 1 we decompose each of these Fourier coefficients in powers of $R$ and indicate the Lagrangian conditions placed on the
coefficients $A_{n,m}$ by enclosing related terms by a box. The four Lagrangian conditions I to IV each has a distinctive shape which 
enables us to trace this net of relations and assist in calculations. 

Condition V does not introduce any relationship on the coefficients of $\bar{\partial}F$ as it
involves $A_{n,0}$, which is annihilated by $\bar{\partial}$. As such, we can view condition V as defining the coefficients $A_{n,0}$ in the full
power series after we have determined the other coefficients.  

\vspace{0.1in}

\begin{Prop}
For the polynomial $P_N$ associated with a non-degenerate umbilic point of order $N$ the Lagrangian conditions I to IV mean that the coefficients
must satisfy
\begin{equation}\label{e:lag1a}
(n+2)A_{n+2,N-n-1}=(N-n)\bar{A}_{N-n,n+1} \qquad {\mbox{for}}\; 1\leq n\leq \lfloor N/2\rfloor-1
\end{equation}
\begin{equation}\label{e:lag2a}
2A_{2,N-1}=N\bar{A}_{N,1}.
\end{equation}
\end{Prop}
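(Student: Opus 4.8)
The plan is to feed the non-degeneracy hypothesis into the Lagrangian relations I--IV and simply read off which coefficients survive. First I would translate ``non-degenerate of order $N$'' into vanishing statements about the $A_{nm}$. Since the functions $e^{2in\theta}$ are linearly independent and every factor $(k-n+1)$ in the formula for $G_k(\theta)$ is nonzero for $0\le n\le k$, the condition $G_k\equiv 0$ for $k<N$ is equivalent to $A_{n\,k-n+1}=0$ for all $0\le n\le k<N$. Writing $m=k-n+1$, this says precisely that $A_{nm}=0$ whenever $m\ge 1$ and $n+m\le N$. The point to keep in mind throughout is that this kills coefficients on the diagonals $n+m\le N$ \emph{only when the second index $m$ is positive}; the purely holomorphic coefficients $A_{n0}$ are never constrained by non-degeneracy. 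The coefficients entering $P_N$ all lie on the diagonal $n+m=N+1$.

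Next I would derive (\ref{e:lag1a}) from IV. Recall that IV is invariant under $(n,m)\mapsto(m,n)$ followed by complex conjugation, so it may be read as a relation valid for all $n,m\ge 1$. Imposing $n+m=N-2$ with $n,m\ge 1$, the two leading terms $A_{n+2\,m+1}$ and $\bar A_{m+2\,n+1}$ sit on the diagonal $n+m=N+1$, while the trailing terms $A_{n+1\,m}$ and $A_{m+1\,n}$ sit on the diagonal $N-1$ with second indices $m\ge 1$ and $n\ge 1$, hence vanish by the paragraph above. What remains is $(n+2)A_{n+2\,m+1}=(m+2)\bar A_{m+2\,n+1}$, and substituting $m=N-2-n$ reproduces (\ref{e:lag1a}). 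The same symmetry shows that the relations for $n$ and for $N-2-n$ coincide, so the independent ones are indexed by $1\le n\le l-1$: when $N=2l+1$ there is no self-paired index, while when $N=2l+2$ the self-paired value $n=l$ collapses to the statement that $A_{l+2\,l+1}$ is real, which is condition I.

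Finally I would derive (\ref{e:lag2a}), which is really the missing $n=0$ endpoint of the pattern in (\ref{e:lag1a}). One cannot simply set $n=0$ in IV, because the trailing term $\bar A_{N-1\,0}$ that then appears has second index $0$ and is therefore \emph{not} annihilated by non-degeneracy. This is exactly what condition II is for: specialising II to $n=N-1$ gives $2(N-3)A_{0\,N-3}-2(N-2)A_{1\,N-2}+2(N-1)A_{2\,N-1}=(N-1)N\bar A_{N1}$, and crucially II contains no purely holomorphic coefficient $A_{*\,0}$. Here $A_{1\,N-2}$ vanishes by non-degeneracy, while $A_{0\,N-3}$ either has positive second index (for $N\ge 4$) and vanishes, or is multiplied by the zero factor $2(N-3)$ (for $N=3$); in both cases that term drops out, leaving $2(N-1)A_{2\,N-1}=(N-1)N\bar A_{N1}$, i.e.\ (\ref{e:lag2a}).

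I expect the main obstacle to be the bookkeeping of exactly which coefficients are forced to vanish: the delicate point is that non-degeneracy eliminates only the coefficients with positive second index on the low diagonals, so that the holomorphic coefficients $A_{n0}$ persist. It is precisely this asymmetry that forces the split into the generic relation (\ref{e:lag1a}), obtained from IV, and the boundary relation (\ref{e:lag2a}), which must instead be extracted from the specially shaped condition II rather than by pushing IV to its endpoint. The figure with the boxed relations should make this index tracing transparent.
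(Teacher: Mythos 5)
Your proposal is correct and follows essentially the same route as the paper: the paper's proof consists of inspecting which boxes (i.e.\ instances of conditions II, III, IV in Figure 1) contain only the order-$N$ coefficients together with lower-order terms killed by non-degeneracy, which is exactly your bookkeeping that $A_{nm}=0$ for $m\geq 1$, $n+m\leq N$, applied to the extended condition IV on the diagonal $n+m=N-2$ (with III as its $m=1$ instance and the self-paired $n=l$ case reducing to I) and to II at $n=N-1$ for the boundary relation (\ref{e:lag2a}). Your writeup is in fact more explicit than the paper's figure-based inspection, and your observation that II must be used at the endpoint precisely because non-degeneracy never annihilates the holomorphic coefficients $A_{n0}$ is the correct reading of the paper's remark about condition V.
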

\begin{proof}
Given $N$, the $R^N$ column of Figure 1 lists the lowerest order non-zero coefficients of $\bar{\partial}F$, namely those that enter into the
definition of $P_N$. The Lagrangian condition that these coefficients must satisfy are indicated by the boxes in which they lie. Equations
(\ref{e:lag1a}) and (\ref{e:lag2a}) are obtained by inspection of the boxes that contain only these non-zero terms and lower order
terms, which by assumption vanish.
\end{proof}
\vspace{0.1in}

We now show that these relations are precisely the equations for the second derivative of $P_N$ to be self-inversive. In more detail, let $P_N$ be a polynomial over ${\mathbb C}$ of degree $N$: 
\begin{align*}
P_N(\zeta)&=\prod_{n=1}^N(\zeta-\zeta_n)\\
 &=\zeta^N+N\Delta_1^N\zeta^{N-1}+{\textstyle{\frac{N(N-1)}{2}}}\Delta_2^N\zeta^{N-2}+...+\;\binom{N}{n}\Delta_n^N\zeta^{N-n}+...+\Delta_N^N,
\end{align*}
where the weighted symmetric functions of the roots $ \{\zeta_n\}_{n=1}^N$ are defined by
\[
{\textstyle{\binom{N}{n}}}\Delta_n^N=\sum_{1\leq i_1<...< i_n\leq N}\zeta_{i_1}\zeta_{i_2}...\zeta_{i_n}.
\]
Consider the following conditions on the coefficients of $P_N$:
\begin{equation}\label{e:rels}
  |\Delta_{N-2}^N|=1
\qquad\qquad
\Delta_{n}^N=\overline{\Delta_{N-2-n}^N}\Delta_{N-2}^N \qquad{\mbox{ for}}\;1\leq n\leq \lfloor N/2\rfloor-1,
\end{equation} 
where $\lfloor x\rfloor$ denotes the largest integer smaller than $x$. Note that these are $N-2$ real conditions on the coefficients of $P_N$,  which give rise to a real $N+2$ dimensional family of degree $N$ polynomials. 

\vspace{0.1in}
\begin{Cor}
For the polynomial $P_N$ associated with a non-degenerate umbilic point of order $N$ the Lagrangian conditions I to IV mean that 
the symmetric functions $\Delta^N_n$ must satisfy equations (\ref{e:rels}).
\end{Cor}
\begin{proof}
We have the definitions
\[
P_N=\sum_{k=0}^N(k+1)\frac{A_{N-k,k+1}}{A_{N,1}}\zeta^{N-k}=\sum_{k=0}^N\binom{N}{k}\;\Delta_k^N\zeta^{N-k},
\]
and so we have the relations
\begin{equation}\label{e:1}
\binom{N}{k}\Delta_k^N=(k+1)\frac{A_{N-k,k+1}}{A_{N,1}}.
\end{equation}
For $k=N-2$ this reads
\[
{\textstyle{\frac{N(N-1)}{2}}}\Delta^N_{N-2}=(N-1)\frac{A_{2,N-1}}{A_{N,1}}.
\]
Thus, with the aid of equation (\ref{e:lag2a}) we have
\[
\Delta^N_{N-2}=\frac{2A_{2,N-1}}{NA_{N,1}}=\frac{\bar{A}_{N,1}}{A_{N,1}}.
\]
Therefore $|\Delta^N_{N-2}|=1$, as claimed.

On the other hand, utilising equation (\ref{e:lag1a}) we deduce that
\[
\binom{N}{k}\Delta_k^N=(k+1)\frac{A_{N-k,k+1}}{A_{N,1}}={\textstyle{\frac{(k+1)(k+2)}{N-k}}}\frac{\bar{A}_{k+2,N-k-1}}{A_{N,1}}.
\]
Now applying equation (\ref{e:1}) again with $k$ replaced by $N-k-2$, we get
\[
\binom{N}{k}\Delta_k^N={\textstyle{\frac{(k+1)(k+2)}{(N-k)(N-k-1)}}}\binom{N}{N-k-2}\frac{\bar{A}_{N,1}}{A_{N,1}}\bar{\Delta}^N_{N-k-2},
\]
which, after some cancellation, reads
\[
\Delta_k^N=\Delta^N_{N-2}\overline{\Delta_{N-k-2}^N},
\]
as claimed.
\end{proof}
\vspace{0.1in}
\begin{Prop}
    The conditions (\ref{e:rels}) on a polynomial $P_N$ are equivalent to the second derivative $P_N''$ being self-recursive.
\end{Prop}
\begin{proof}
Starting with the definition
\[
P_N(\zeta)=\sum_{n=0}^{N}\binom{N}{n}\Delta_n^N\zeta^{N-n},
\]
differentiate twice to get that
\[
P_N''(\zeta)=\sum_{n=0}^{N-2}\binom{N}{n}(N-n)(N-n-1)\Delta_n^N\zeta^{N-n-2}=\sum_{n=0}^{N-2}\frac{N!}{n!(N-n-2)!}\Delta_n^N\zeta^{N-n-2}.
\]
The self-inversive property for the polynomial $P_n''$ of degree $N-2$ is
\[
P_n''(\zeta)=e^{i\beta}\zeta^{N-2}\overline{P_n''\left({\textstyle{\frac{1}{\overline{\zeta}}}}\right)},
\]
for some real constant $\beta$. In terms of the expansion above we have
\[
\sum_{n=0}^{N-2}\frac{N!}{n!(N-n-2)!}\Delta_n^N\zeta^{N-n-2}=e^{i\beta}\sum_{m=0}^{N-2}\frac{N!}{m!(N-m-2)!}\Delta_m^N\zeta^{m}
\]
and so comparing terms we have $m=N-n-2$ and
\[
\Delta_n^N=e^{i\beta}\overline{\Delta_{N-n-2}^N}.
\]
For $n=0$ we have $1=\Delta_0^N=e^{i\beta}\overline{\Delta_{N-2}^N}$ and so
\[
e^{i\beta}={\Delta_{N-2}^N} \qquad \Delta_n^N={\Delta_{N-2}^N}\overline{\Delta_{N-n-2}^N}
\]
which are equations (\ref{e:rels}) as claimed.

\end{proof}
\vspace{0.1in}

Now, given a polynomial $P_N$ satisfying relations (\ref{e:rels}) we seek to construct a convex real analytic surface in ${\mathbb R}^3$ with umbilic point at $\xi=0$. We do this by first using the coefficients defined by $P_N$ as terms of order $N$ in the power series of $\bar{\partial}F$, and then adding further terms of higher order in such a way that the full Lagrangian condition holds. By moving to higher order terms we ensure that the index of the umbilic point resides in the lowest order term, namely $P_N$. 

Such an extension will be far from unique, but we seek to extend in such a way that minimises the number of non-zero terms introduced. Finally, we must ensure that the resulting umbilic point at $\xi=0$ is indeed isolated, and hence we can apply Hamburger's result to prove the stated Theorem.

Once again Figure 1 is useful as a guide to the equations we are dealing with. Suppose then that we have non-zero terms in the $R^N$ column of $\bar{\partial}F$ (namely, those given by $P_N$) and all terms of lower degree are zero. We will see that it is only necessary to go to terms of order $R^{N+2}$ to complete the Lagrangian conditions. In this we identify 6 situations of overlapping equations that must dealt with separately.

\setlength{\epsfxsize}{4.5in}
\begin{center}
\includegraphics[width=80mm]{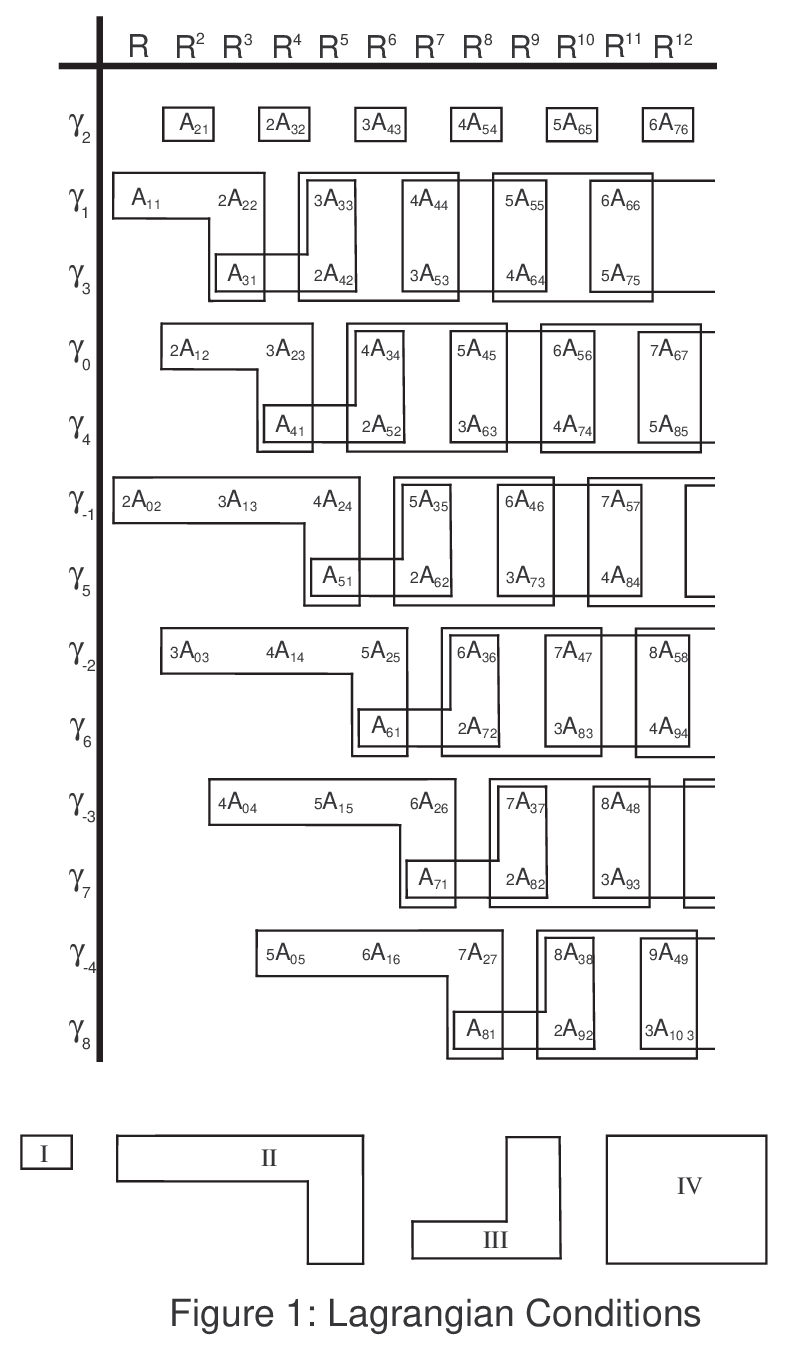}
\end{center}

\noindent{\bf Case 1: IV IV IV}

Thus we have the Lagrangian condition
\[
(n+2)A_{n+2,m+1}=(m+2)\bar{A}_{m+2,n+1},
\]
and the two higher order equations
\[
(n+3)A_{n+3,m+2}+nA_{n+2,m+1}=(m+3)\bar{A}_{m+3,n+2}+m\bar{A}_{m+2,n+1}
\]
\[
(n+1)A_{n+3,m+2}=(m+1)\bar{A}_{m+3,n+2}.
\]
The first and third of these, rearranged to
\[
A_{m+2,n+1}=\frac{n+2}{m+2}\bar{A}_{n+2,m+1} \qquad\qquad A_{m+3,n+2}=\frac{n+1}{m+1}\bar{A}_{n+3,m+2},
\]
and substituted in the second, yield 
\[
A_{n+3,m+2}=\frac{m+1}{m+2}A_{n+2,m+1},
\]
and so 
\[
A_{m+3,n+2}=\frac{n+1}{m+2}\bar{A}_{n+2,m+1}.
\]
With these choices the power series is truncated.

\vspace{0.1in}

\noindent{\bf Case 2: III IV IV}

First we have the Lagrangian condition at order $N$:
\[
(N-1)A_{N-1,2}=3\bar{A}_{3,N-2},
\]
and two order $N+2$ conditions:
\[
NA_{N,3}+(N-3)A_{N-1,2}=4\bar{A}_{4,N-1}+\bar{A}_{3,N-2}
\]
\[
(N-2)A_{N,3}=2\bar{A}_{4,N-1}.
\]
Combining these as before we get
\[
A_{N-1,2}=\frac{3}{N-1}\bar{A}_{3,N-2}
\qquad
A_{4,N-1}=\frac{N-2}{N-1}A_{3,N-2}
\qquad
A_{N,3}=\frac{2}{N-1}\bar{A}_{3,N-2}.
\]

\vspace{0.1in}

\noindent{\bf Case 3: II III IV}

The Lagrangian condition at order $N$ is
\[
2A_{2,N-1}=N\bar{A}_{N,1},
\]
and the other conditions are
\[
(N+1)A_{N+1,2}+(N-2)A_{N,1}=3\bar{A}_{3,N}
\]
\[
(N-1)A_{N+1,2}=\bar{A}_{3,N}.
\]
Solving these we have
\[
A_{N,1}=\frac{2}{N}\bar{A}_{2,N-1}
\qquad
A_{3,N}=\frac{N-1}{N}A_{2,N-1}
\qquad
A_{N+1,2}=\frac{1}{N}\bar{A}_{2,N-1}.
\]

\vspace{0.1in}

\noindent{\bf Case 4: II}

Here we have exhausted the Lagrangian condition, but must accommodate the tail of the polynomial $P_N$. That is
\[
-NA_{1,N}+(N+1)A_{2,N+1}=0,
\]
or
\[
A_{2,N+1}=\frac{N}{N+1}A_{1,N}.
\]
The other term is
\[
(N+1)A_{0,N+1}-(N+2)A_{1,N+2}=0,
\]
or
\[
A_{1,N+2}=\frac{N+1}{N+2}A_{0,N+1}.
\]

\vspace{0.1in}

\noindent{\bf Case 5: V}

As mentioned earlier, this Lagrangian condition does not enter into the relations enjoyed by the coefficients of $\bar{\partial}F$, but is taken as
a definition of the non-vanishing holomorphic part of $F$. In particular, we have
\[
A_{N+1,0}=\frac{1}{N+1}\bar{A}_{1,N},
\]
and
\[
(N+3)A_{N+3,0}=\bar{A}_{1,N+2}-2\bar{A}_{0,N+1}=\frac{N+1}{N+2}\bar{A}_{0,N+1}-2\bar{A}_{0,N+1},
\]
or
\[
A_{N+3,0}=-\frac{1}{N+2}\bar{A}_{0,N+1}.
\]

\vspace{0.1in}

\noindent{\bf Case 6: I}

When $N=2l+2$ is even, we have the Lagrangian condition
\[
A_{l+2,l+1}=\bar{A}_{l+2,l+1}.
\]
As this involves only one coefficient of the power series it is not strictly necessary to relate it to any other term to maintain the
Lagrangian condition. However, in order to ensure that the umbilic on the surface is isolated, we impose the final following condition
\[
A_{l+3,l+2}=\frac{l}{l+2}A_{l+2,l+1}.
\]   

Combining all of the preceding cases, we now establish:

\vspace{0.1in}

\begin{Thm}\label{t:main}
Given a polynomial $P_N$ satisfying the relations (\ref{e:rels}), choose $A_{N,1}\in{\mathbb C}^*$ and define the
complex numbers $\{A_{n,N-n+1}\}_{n=0}^{N-1}$ by
\[
A_{N-n,n+1}={\textstyle{\frac{1}{n+1}}}\binom{N}{n}A_{N,1}\Delta_n^N,
\]
for $n=1,2,...,N$.

Then, for $N=2l+1$ the following function defines a convex real analytic surface $S$ in ${\mathbb R}^3$:
\[
F=\sum_{n=0}^{l+1}A_{n,N-n+1}(1+{\textstyle{\frac{N-n+1}{N-n+2}}}\xi\bar{\xi})\xi^{n}\bar{\xi}^{N-n+1}
     +\bar{A}_{n,N-n+1}({\textstyle{\frac{n}{N-n+2}}}+{\textstyle{\frac{n-1}{N-n+2}}}\xi\bar{\xi})\xi^{N-n+2}\bar{\xi}^{n-1},
\]
with support function
\[
r=\frac{2}{1+\xi\bar{\xi}}\sum_{n=0}^{l+1} \left({\textstyle{\frac{A_{n,N-n+1}}{N-n+2}}}\xi^{n}\bar{\xi}^{N-n+2}+{\textstyle{\frac{\bar{A}_{n,N-n+1}}{N-n+2}}}\xi^{N-n+2}\bar{\xi}^{n}\right)+C   .
\]
For $N=2l+2$ the function is
\begin{align}
F&=\sum_{n=0}^{l+1}A_{n,N-n+1}(1+{\textstyle{\frac{N-n+1}{N-n+2}}}\xi\bar{\xi})\xi^{n}\bar{\xi}^{N-n+1}+\bar{A}_{n,N-n+1}({\textstyle{\frac{n}{N-n+2}}}+{\textstyle{\frac{n-1}{N-n+2}}}\xi\bar{\xi})\xi^{N-n+2}\bar{\xi}^{n-1}\nonumber \\
&\qquad\qquad\qquad
     +A_{l+2,l+1}(1+{\textstyle{\frac{l+1}{l+2}}}\xi\bar{\xi})\xi^{l+2}\bar{\xi}^{l+1},\nonumber
\end{align}
and the support function is
\begin{align}
r&=\frac{2}{1+\xi\bar{\xi}}\sum_{n=0}^{l+1}\left( {\textstyle{\frac{A_{n,N-n+1}}{N-n+2}}}\xi^{n}\bar{\xi}^{N-n+2}+{\textstyle{\frac{\bar{A}_{n,N-n+1}}{N-n+2}}}\xi^{N-n+2}\bar{\xi}^{n}\right)
\nonumber\\
&\qquad\qquad\qquad  +\frac{2}{(1+\xi\bar{\xi})(l+2)} A_{l+2,l+1}\xi^{l+2}\bar{\xi}^{l+2}   +C .\nonumber   
\end{align}

In both cases, $S$ has an isolated umbilic point at $\xi=0$ which is non-degenerate of order $N$. 
\end{Thm}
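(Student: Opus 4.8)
The plan is to verify three things about the explicitly displayed $F$ and its claimed support function $r$: that $F$ is a genuine Lagrangian section with support function $r$ (so that it arises as the oriented normal congruence of a real analytic surface $S$); that, after a suitable choice of the additive constant $C$, the surface $S$ is convex; and that $\xi=0$ is an isolated umbilic of $S$, non-degenerate of order $N$. By construction the coefficients on the anti-diagonal $n+m=N+1$ are $A_{N-n\,n+1}=\tfrac{1}{n+1}\binom{N}{n}A_{N1}\Delta_n^N$, the terms defining $P_N$, while the terms of order $R^{N+2}$ are exactly those forced (Cases 1--4) or chosen (Cases 5--6) above; so the displayed $F$ is simply the assembly of those case-by-case computations into closed form.

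First I would confirm that $F$ satisfies the Lagrangian condition (\ref{e:int}), equivalently the relations I--V of Proposition \ref{p:lagcon}. At order $R^N$ the point is that the anti-diagonal coefficients produced by the holomorphic ($A$) parts of the sum and those produced by the anti-holomorphic ($\bar A$) parts must coincide; this compatibility is precisely equations (\ref{e:lag1a})--(\ref{e:lag2a}), shown in the preceding Corollary to be equivalent to the hypotheses (\ref{e:rels}). At order $R^{N+2}$ one reads off the coefficients implied by the $\xi\bar\xi$-factors in $F$ and checks that they match the prescriptions of Cases 1--6; since $F$ is a finite polynomial, this closes the Lagrangian relations exactly, with nothing surviving at order $R^{N+3}$. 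In parallel I would verify (\ref{e:suppfunc}), that is $\bar\partial r=2F/(1+\xi\bar\xi)^2$, by direct differentiation: writing $r=\tfrac{2}{1+\xi\bar\xi}S+C$ this reduces to the identity $F=(1+\xi\bar\xi)\bar\partial S-\xi S$, and one notes $r=\bar r$ because the summands of $S$ occur in conjugate pairs (the extra even-$N$ term being manifestly real). As $F$ is a polynomial in $\xi,\bar\xi$, it is real analytic and so is $S$.

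The convexity of $S$ is the main obstacle, and I would obtain it through parallel surfaces. Replacing $r$ by $r+C$ fixes $F$, hence $\bar\partial F$ and the umbilic set, but moves $S$ to the parallel surface at signed distance $C$, whose principal radii of curvature are those of $S$ increased by $C$. Computing the radii from $(r,F)$ and its first derivatives using the background of \cite{gak2}, and using that $F$ is polynomial, these radii are bounded on any fixed closed neighbourhood of $\xi=0$ and hence bounded below there. Choosing $C$ large and positive then makes both principal radii strictly positive on such a neighbourhood, so $S$ is convex there with positive Gauss curvature and regular Gauss parametrisation near the umbilic. This local convexity around the isolated umbilic is what Hamburger's bound requires. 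The point needing care is to pin down an explicit neighbourhood on which the radii remain bounded below uniformly in $\theta$, so that one constant $C$ secures convexity throughout it.

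Finally I would show that $\xi=0$ is an isolated umbilic of order $N$. Writing $\bar\partial F=\sum_k G_k(\theta)e^{-ik\theta}R^k$ with $G_k(\theta)=\sum_{n=0}^k(k-n+1)A_{n\,k-n+1}e^{2in\theta}$, the construction populates only the anti-diagonals $n+m=N+1$ and $n+m=N+3$, so $G_k\equiv 0$ for $k<N$; thus $\xi=0$ is umbilic, and it is non-degenerate of order $N$ once $G_N(\theta)\neq0$ for all $\theta$. But $G_N(\theta)=A_{N1}P_N(e^{2i\theta})$, which is nonzero for every $\theta$ exactly because $P_N$ has no root on the unit circle. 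Isolation then follows from the dominant-term estimate $|\bar\partial F|\ge\big(\min_\theta|G_N(\theta)|\big)R^N-O(R^{N+1})>0$ for all small $R>0$, so the origin is the only umbilic in a punctured neighbourhood; the choices of Cases 5--6 are arranged so that, in the even case, the real coefficient $A_{l+2\,l+1}$ and its higher-order partner respect condition I without disturbing this leading behaviour. Together with the convexity above and the index formula $I=K-N/2$, this brings $S$ within the scope of Hamburger's theorem and completes the proof.
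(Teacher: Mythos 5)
Your proposal is correct and follows essentially the same route as the paper: you verify $\bar{\partial} r = 2F/(1+\xi\bar{\xi})^2$ by direct differentiation (your identity $F=(1+\xi\bar{\xi})\bar{\partial} S-\xi S$ is exactly the paper's computation), and you obtain isolation and non-degeneracy of order $N$ from $G_N(\theta)=A_{N1}P_N(e^{2i\theta})\neq 0$ using the no-roots-on-the-circle hypothesis. The only differences are minor: the paper's computation gives the exact factorization $\bar{\partial}F=(1+R^2)G_N(\theta)e^{-iN\theta}R^N$, so no dominant-term estimate is needed, while your explicit large-$C$ parallel-surface argument for local convexity spells out a step the paper leaves implicit.
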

\begin{proof}
We will show that $F$ and $r$ satisfy (\ref{e:suppfunc}), which implies the Lagrangian condition (\ref{e:int}).
For $N=2l+1$ we have:
\begin{align}
\bar{\partial}r&=\frac{2}{1+\xi\bar{\xi}}\sum_{n=0}^{l+1}\left(A_{n,N-n+1}\xi^n\bar{\xi}^{N-n+1}+{\textstyle{\frac{n}{N-n+2}}}\bar{A}_{n,N-n+1}\xi^{N-n+2}\bar{\xi}^{n-1}
      \right)\nonumber\\
&\qquad\qquad\qquad\qquad -\frac{2\xi}{(1+\xi\bar{\xi})^2}\sum_{n=0}^{l+1}\left(\frac{A_{n,N-n+1}}{N-n+2}\xi^n\bar{\xi}^{N-n+2}+\frac{\bar{A}_{n,N-n+1}}{N-n+2}
 \xi^{N-n+2}\bar{\xi}^n\right)\nonumber\\
&=\frac{2}{(1+\xi\bar{\xi})^2}\sum_{n=0}^{l+1}\left[A_{n,N-n+1}\left(1+{\textstyle{\frac{N-n+1}{N-n+2}}}\xi\bar{\xi}\right)\xi^n\bar{\xi}^{N-n+1}\right.\nonumber\\
&\qquad\qquad\qquad\qquad\qquad \left.+\bar{A}_{n,N-n+1}\left({\textstyle{\frac{n}{N-n+2}}}+{\textstyle{\frac{n-1}{N-n+2}}}\xi\bar{\xi}\right)\xi^{N-n+2}\bar{\xi}^{n-1}\right]\nonumber\\
&=\frac{2F}{(1+\xi\bar{\xi})^2},\nonumber
\end{align}
where the first equality comes from differentiating the explicit expression for $r$, the second equality from rearranging terms and the final equality from the definition of $F$.

The proof for $N=2l+2$ is the same with the added computation
\[
\bar{\partial}\left(\frac{2A_{l+2,l+1}\xi^{l+2}}{(1+\xi\bar{\xi})(l+2)} \bar{\xi}^{l+2}\right)
    =\frac{2{A}_{l+2,l+1}}{(1+\xi\bar{\xi})^2}(1+{\textstyle{\frac{l+1}{l+2}}}\xi\bar{\xi})\xi^{l+2}\bar{\xi}^{l+1}.
\]
For $N>0$ clearly $\xi=0$ is an umbilic point. To show that it is isolated we compute
\begin{align}
\bar{\partial}F&=(1+\xi\bar{\xi})\sum_{n=-1}^{l}(N-n)A_{n+1,N-n}\xi^{n+1}\bar{\xi}^{N-n-1}
     +{\textstyle{\frac{n(n+1)}{N-n+1}}}\bar{A}_{n+1,N-n}\xi^{N-n+1}\bar{\xi}^{n-1}\nonumber\\
&=(1+\xi\bar{\xi})\sum_{n=0}^{N}(N-n+1)A_{n,N-n+1}\xi^{n}\bar{\xi}^{N-n}\nonumber\\
&=(1+R^2)G_N(\theta)e^{-iN\theta}R^N.\nonumber
\end{align}
By assumption, the polynomial $P_N$ has no zeros on the unit circle, and so $G_N(\theta)\neq0$. Thus $R=0$ is an isolated umbilic point and
is non-degenerate of order $N$.
\end{proof}

\vspace{0.2in}

\section{Examples}

We now demonstrate direct proofs of the Main Theorem for orders $N=1,2,3,4$. For order $N\geq5$ there is no direct method. 

\vspace{0.1in}

\noindent{\bf N=1,2}:

In these cases the statement is easily seen to be trivially true. 

\vspace{0.1in}

\noindent{\bf N=3}:

Let $\zeta_1,\zeta_2$ and $\zeta_3$ be the roots of the cubic polynomial satisfying the single Lagrangian condition
\[
|\Delta_1^3|=1 .
\]
Thus
\[
{\textstyle{\frac{1}{3}}}|\zeta_1+\zeta_2+\zeta_3|=1
\]
and we conclude that $|\zeta_i|\geq 1$ for some $i$. Thus the number of roots inside the unit circle must be less than or equal to 2, as claimed.

\vspace{0.1in}

\noindent{\bf N=4}:

Let $\zeta_1,\zeta_2$,$\zeta_3$ and $\zeta_4$ be the roots of the quartic polynomial satisfying the Lagrangian conditions
\[
|\Delta_2^4|=1 \qquad\qquad \Delta_1^4=\Delta_2^4\bar{\Delta}_1^4.
\]
The first of these conditions implies that
\[
{\textstyle{\frac{1}{6}}}|\zeta_1\zeta_2+\zeta_1\zeta_3+\zeta_1\zeta_4+\zeta_2\zeta_3+\zeta_2\zeta_4+\zeta_3\zeta_4|=1,
\]
and we conclude that $|\zeta_i\zeta_j|\geq 1$ for some $i$ and $j$. This means that $|\zeta_i|\geq 1$ for some $i$, and hence the number of roots inside the unit circle must be less than or equal to 3, as claimed.

\vspace{0.1in}

\noindent{\bf N=5}:

In the case of the quintic we are considering (after substitution of the Lagrangian conditions) the family of polynomials:
\[
P_5=\zeta^5+5\alpha e^{iA}\zeta^4+10\bar{\alpha}\zeta^3+10e^{iA}\zeta^2+\beta\zeta+\gamma,
\]
for $\alpha,\beta,\gamma\in{\mathbb C}$ and $A\in{\mathbb R}$.

The Main Theorem states that such a polynomial has at most 3 roots inside the unit circle. By elementary arguments, as in the previous cases, it is not hard to show that there are at most 4 roots inside the unit circle, but reducing this to 3 roots is extremely difficult by elementary methods.

\vspace{0.2in}

\noindent{\bf Acknowledgements}:
The authors would like to thank Gerhard Schmeisser for helpful discussions during the original evolution of this work and to the anonymous Referee for pointing out the relationship with self-inversive polynomials. 

\vspace{0.2in}

\noindent{\bf Statements and Declarations}:

This research was supported by the  Research in Pairs program of the Mathematisches Forschungsinstitut Oberwolfach. The authors have no further relevant financial or non-financial interests to disclose. No data was collected in the course of this research.
\vspace{0.2in}

\end{document}